\theoremstyle{plain} 
\newtheorem{thm}{Theorem}
\newtheorem{lem}[thm]{Lemma}
\newtheorem{prop}[thm]{Proposition}
\theoremstyle{definition}
\newtheorem{defn}{Definition}
\newtheorem{conj}{Conjecture}
\theoremstyle{remark}
\begin{document}

\title[Uryson width and pants decompositions of hyperbolic surfaces]{Uryson width and pants decompositions of hyperbolic surfaces}

\author{Gregory R. Chambers}
\email{gchambers@rice.edu}
\address{Department of Mathematics, Rice University, Houston, Texas, USA}
\date{\today}
\begin{abstract}
    Suppose that $M$ is a hyperbolic closed orientable surface of genus $g$ and with $n$ cusps.
    Then we can find a pants decomposition of $M$ composed of simple closed geodesics so that each curve is contained in a ball of diameter at most $C \sqrt{g + n}$, where $C$ is a universal constant.
\end{abstract}
\maketitle

\section{Introduction}
\label{sec:intro}

In this article, we will examine pants decompositions of hyperbolic surfaces.  In particular, suppose that $M$ is a hyperbolic
surface with genus $g$ and with $n$ cusps.  A pants decomposition of such a surface is a finite sequence $\gamma_1, \dots, \gamma_k$ of simple closed smooth curves which are pairwise disjoint, and so that if we remove their images from $M$, the remainder is a finite union of thrice punctured hyperbolic spheres.  If $g + 2n < 3$, then no decomposition exists; we will assume
that $g + 2n \geq 3$.  

In general there are possible ways to decompose such a hyperbolic surface; we will be interested in the lengths of
the curves $\gamma_1, \dots, \gamma_k$.  In \cite{Bers1} and \cite{Bers2}, Bers showed that in $M$ is closed, 
there is a choice so that these lengths are all bounded by a constant that depends only on the genus; these bounds
are called Bers' constants.

In \cite{Buser1}, \cite{Buser2}, and \cite{Buser3}, Buser studied optimal bounds on these constants, producing linear
upper bounds and square root lower bounds (the square root of $g + n$ if the surface has cusps).  He made the following conjecture:

\begin{conj}[Buser]
    \label{conj:buser}
    Suppose that $M$ is a hyperbolic surface with genus $g$ and $n$ cusps.  Then it has a pants decomposition in which each curve has length at most $C \sqrt{g + n}$ for some universal constant $C$.
\end{conj}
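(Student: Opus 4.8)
The plan is to connect the two objects in the title: produce a map from $M$ onto a graph with small fibres (a Uryson $1$-width estimate), then read off a pants decomposition from the fibres and tighten the curves to geodesics. The geometry enters only through Gauss--Bonnet, which gives $\mathrm{Area}(M)=2\pi(2g-2+n)\asymp g+n$, and through the thick--thin decomposition. Set $w:=C\sqrt{g+n}$.

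\textbf{Step 1 (width estimate).} I would construct a continuous map $f\colon M\to G$ onto a finite graph whose fibres are $1$-complexes of size at most $w$. After truncating each cusp along a horocycle of bounded length --- which we carry along for free --- the thin part is a disjoint union of collars and cusp neighbourhoods, each swept by short curves (core geodesics, horocycles), while on the thick part the injectivity radius is at least the Margulis constant $\varepsilon_0$; hence a maximal $\varepsilon_0$-separated set has $\lesssim g+n$ points (the disjoint $\varepsilon_0/2$-balls around them have definite area), and mapping $M$ onto the nerve of the associated cover and then bending the map keeps the fibres small. This is the estimate $UW_1(M)\lesssim\sqrt{g+n}$; it is here --- not in Buser's linear bound --- that the square root is produced.

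\textbf{Step 2 (topological pants decomposition).} Making $f$ generic and cutting $M$ along $\Gamma:=f^{-1}(\{\text{edge midpoints}\})$ produces a family of pairwise disjoint simple closed curves, each contained in a ball of radius $\le w$, together with complementary components each lying in the $1$-neighbourhood of a single fibre (by compactness, the complement of a neighbourhood of a fibre maps into a compact subset of $G$ avoiding that point), hence each of diameter $\le w+2$. Discard the inessential and boundary-parallel curves of $\Gamma$, and inside each remaining component adjoin curves splitting it into pairs of pants; every $\Gamma$-curve lies in a ball of radius $\le w$, and every adjoined curve lies inside a component of diameter $\le w+2$. Thus $M$ carries a pants decomposition by simple closed curves each contained in a ball of radius $\le w+2$.

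\textbf{Step 3 (geodesics --- the main obstacle).} Replace each curve by the simple closed geodesic freely homotopic to it; disjoint, pairwise non-isotopic, essential simple closed curves have disjoint simple closed geodesic representatives, so this is again a pants decomposition. The difficulty, and the main obstacle, is that tightening can a priori move a curve out of every small ball: the geodesic representative of a simple closed curve of diameter $D$ need not have diameter comparable to $D$ unless one also controls its length. If $c$ has length $O(w)$ the problem disappears --- a lift to $\mathbb{H}^2$ is an arc of length $O(w)$ from $\tilde x$ to $h\tilde x$, lying within $O(w)$ of the axis of $h$ and with $h$-translates covering a fundamental segment of that axis, so the geodesic representative lies within Hausdorff distance $O(w)$ of $c$ --- so one wants the fibres of $f$ to be short \emph{as curves}, not merely of small diameter, and to propagate this length control through the completion to a pants decomposition (possibly by applying the width estimate recursively to the components, whose areas, and hence the resulting bounds, do not grow). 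Securing this quantitative form of the width estimate is where the real work lies. Granting it, every curve in the final decomposition is a simple closed geodesic contained in a ball of radius $O(\sqrt{g+n})$, and absorbing all constants into $C$ finishes the proof.
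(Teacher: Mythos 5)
There is a genuine gap here, and it is the one you flag yourself at the end of Step~3 --- but it is worth being explicit that this gap is not a technicality to be ``secured'': it is the open content of the conjecture. The statement you are proving is stated in the paper as a \emph{conjecture}, and the paper proves only the weaker diameter analogue (Theorem~\ref{thm:main}): each curve of the decomposition is a simple closed geodesic contained in a ball of diameter $\lesssim\sqrt{g+n}$, not a curve of length $\lesssim\sqrt{g+n}$. The reason is exactly the obstruction you identify: the Uryson-width input (Guth's theorem, Theorem~\ref{thm:guth}) controls only the \emph{diameter} of the fibres $f^{-1}(p)$, not their length, and no known argument upgrades diameter control of a fibre (or of a level set of an approximating Morse function) to length control. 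Your Step~3 is correctly structured as a conditional --- ``if $c$ has length $O(w)$ the problem disappears'' --- but the hypothesis of that conditional is precisely what is missing, and ``applying the width estimate recursively to the components'' does not produce it, since each application again yields only diameter bounds. So the proposal, as written, proves at best the diameter statement.

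Two further comments. First, even for the diameter statement your Step~3 route (take the free homotopy class and pass to its geodesic representative, then argue via lifts to $\H^2$) needs length control to bound how far the geodesic representative drifts from the original curve; the paper avoids this by running the Hass--Scott disc curve-shortening process, whose elementary steps replace arcs of length $\le\eta/100$ by geodesic arcs and hence \emph{preserve} the diameter bound at every stage (Lemma~\ref{lem:diam}), so the limiting geodesics inherit the diameter bound with no length hypothesis. Second, your Step~1 sketch (Margulis thick--thin, an $\varepsilon_0$-net of $\lesssim g+n$ points, and the nerve of the ball cover) does not by itself give the Uryson $1$-width bound: the nerve of such a cover is not $1$-dimensional, and producing a map to a $1$-complex with fibres of diameter $O(\sqrt{\mathrm{Area}})$ is the nontrivial content of Guth's theorem, which the paper invokes as a black box (after capping the cusps to get a closed surface, and using Balacheff--Parlier's Theorem~\ref{thm:sphere_2} on the genus-zero pieces, which is the one place a genuine length bound is available).
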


A more in depth discussion of the background of his conjecture can be found in the introduction of \cite{BP}, in which
Balacheff and Parlier prove Conjecture~\ref*{conj:buser} if $g = 0$. In this article, we prove the following:
\begin{thm}
    \label{thm:main}
    Suppose that $M$ is a hyperbolic manifold with genus $g$ and $n$ cusps.  Then $M$ has a pants decomposition so that each curve is a simple closed geodesic, and each has diameter at most $C \sqrt{g + n}$.
\end{thm}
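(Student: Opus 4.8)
The plan is to pass from a Uryson–width bound for surfaces of bounded area to a topological pants decomposition whose pieces all have controlled diameter, and then to straighten everything to geodesics. As a first reduction, recall that by Gauss--Bonnet $\operatorname{Area}(M)=2\pi(2g-2+n)$, which is comparable to $g+n$. I would truncate each cusp along its horocycle of length $1$ to obtain a compact surface $M_0$ with horocyclic boundary, $\operatorname{Area}(M_0)\le C(g+n)$, noting that $M_0\hookrightarrow M$ does not decrease distances and that the cusp horocycles, which have tiny diameter, can be appended to the final decomposition afterwards; everything then takes place on $M_0$.

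The crucial step is to produce a topological pants decomposition $c_1,\dots,c_k$ of $M_0$ such that \emph{every} complementary pair of pants has diameter at most $C\sqrt{g+n}$ (so in particular each $c_j$ does). I would start from the Uryson $1$-width estimate for surfaces, which supplies a continuous map $f\colon M_0\to\Gamma$ onto a finite graph with all fibers of diameter at most $C\sqrt{\operatorname{Area}(M_0)}$; after making $f$ a Morse map onto a trivalent $\Gamma$, the preimages of the edge-midpoints form a disjoint family of embedded circles of small diameter, and cutting along them gives pieces indexed by the vertices. These pieces need not be pairs of pants, and their diameters are a priori uncontrolled --- a union of small-diameter fibers over an arc of $\Gamma$ can be large --- so one iterates: inside any piece that is not yet a pair of pants one runs the Uryson construction again to slice it further along a short-diameter curve, inducting on the topological complexity $3\cdot\text{genus}+\#\text{cusps}+\#\text{boundary}-3$, which strictly decreases at each cut, so the process terminates. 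A genuine subtlety is that a Margulis tube about a very short geodesic (or a very long thin cuff) forces a complementary pair of pants of unbounded diameter; these must be allowed, and one records separately that any curve isotopic into such a thin part has diameter at most the Margulis constant and so does no harm.

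Finally I would replace each $c_j$ by its geodesic representative $\gamma_j$; geodesic representatives of the curves of a pants decomposition again form a pants decomposition by pairwise disjoint simple closed geodesics. To bound $\operatorname{diam}(\gamma_j)$: the curve $\gamma_j$ is freely homotopic to $c_j$, which lies on the boundary of exactly two complementary pants $P,P'$; since cutting a hyperbolic surface along simple closed geodesics produces convex pieces --- their universal covers embed as convex subsets of $\H^2$, so the axis of any element of their fundamental group lies inside --- the class $[c_j]$, being carried by a subsurface isotopic to $P\cup P'$ with geodesic boundary, has its geodesic representative contained in that subsurface. Hence $\operatorname{diam}(\gamma_j)$ is bounded in terms of the diameters of $P$, $P'$ and $c_j$, i.e.\ by $C'\sqrt{g+n}$, which yields the theorem. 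The main obstacle is the refined Uryson bound of the second step --- upgrading diameter control on fibers to diameter control on the complementary pants, uniformly in $g+n$ and with no logarithmic loss, while correctly accommodating the thin parts --- together with the coupled point of arranging that the topological pieces it produces are geometrically close enough to their geodesic counterparts for the straightening estimate to apply.
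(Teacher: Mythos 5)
Your skeleton (Uryson $1$-width bound $\lesssim\sqrt{\mathrm{Area}}$, Morse-ification, cutting along level curves, then straightening to geodesics) is the same as the paper's, but the proposal hinges on a requirement that is both stronger than needed and false in general: that every complementary pair of pants can be arranged to have diameter $\lesssim\sqrt{g+n}$. A pair of pants adjacent to a very short geodesic of length $\ell$ contains its collar, of width $\approx 2\log(1/\ell)$, which is unbounded as $\ell\to 0$; your Margulis-tube caveat names the problem but does not repair the step that actually consumes the pants-diameter bound (the straightening estimate). Likewise, iterating the Uryson construction inside a non-pants piece controls the diameters of the new \emph{curves}, but nothing in the induction on complexity ever controls the diameter of the terminal pants, so the process terminates without delivering the property you need. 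The paper sidesteps all of this: it only ever needs the \emph{curves} of the decomposition to have small ambient diameter, never the complementary pieces.

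The decisive gap is the straightening step. A small ambient diameter for $c_j$ does not bound the diameter of its geodesic representative: the free homotopy can travel far, and a curve of small diameter may have enormous length, so there is no a priori bound on the distance from $c_j$ to its axis. Your containment argument (the geodesic lies in the geodesic subsurface isotopic to $P\cup P'$) both relies on the unavailable pants-diameter bound and omits the comparison between the diameter of that geodesic subsurface and the diameter of $P\cup P'$. The paper's fix is different and is the technical heart of the argument: run the Hass--Scott disk flow on the whole family at once, replacing arcs inside convex discs of radius below the injectivity radius of the thick part by geodesic arcs; Lemma~\ref{lem:diam} (a lift to $\mathbb{H}^2$ plus geodesic convexity of balls) shows each elementary replacement preserves the diameter bound, and the limits are disjoint simple closed geodesics. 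Two further points are missing. First, a horocycle about a cusp is peripheral, so ``appending the cusp horocycles'' does not yield a pants decomposition; the pieces produced by the width argument are spheres with three curve-boundaries and up to $n$ cusps, and these must still be decomposed --- the paper does this with Balacheff--Parlier's genus-zero theorem for spheres with cusps and geodesic boundary (Theorem~\ref{thm:sphere_2}), followed by another application of Hass--Scott. Second, Guth's theorem applies to closed manifolds, so the cusps must be capped off (the paper glues in small hemispheres and checks the resulting curves avoid the caps), and the cases $g\le 1$ require separate treatment.
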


A higher-dimensional version of this theorem, sweeping out closed Riemannian manifolds using $1$-dimensional cycles,
was proved by Nabutovsky, Rotman, and Sabourau in \cite{NRS}.

The organization of the remainder of the article is as follows. In Section~\ref*{sec:proof}, we prove Theorem~\ref{thm:main}, leaving several main components of the proof for later discussion.  These components involve
bounds on the Uryson width of a closed Riemannian manifold, which will be discussed in Section~\ref*{sec:uryson}, and 
a curve shortening process developed by Hass and Scott, which will be discussed in Section~\ref*{sec:short}.

We close this section with a few remarks.  Throughout this article, we will use the notation $A \lesssim B$ to mean
that there is some universal constant $C$ so that $A \leq C B$.  If $C$ depends on the dimension $n$ only, then we will write $A \lesssim_n B$ to mean $A \leq C(n) B$.  We define $\gtrsim$ and $\gtrsim_n$ analogously.  We define $A \approx B$ to mean
$A \lesssim B$ and $B \lesssim A$, and $A \approx_n B$ to mean $A \lesssim_n B$ and $B \lesssim_n A$.

We will also state the following proposition containing several standard facts about hyperbolic surfaces; these will be useful later on. Since these results are standard, we omit their proofs:
\begin{prop}
    \label{prop:hyperbolic}
    Suppose that $M$ is a hyperbolic surface of genus $g$ and with $n$ cusps.  Then the following are true:
    \begin{enumerate}
        \item The area of $M$ is $\lesssim \sqrt{g + n}$.
        \item For every cusp of $M$, there is an open subset $U$ of $M$ which is isometric to
            $D = \{ z \in \mathbb{R}^2 : 0 < |z| < 1\}$ with a metric $G$ so that:
            \begin{enumerate}
                \item The lengths of the circle $C_{\rho} = \{ z \in \mathbb{R}^2 : |z| = \rho \}$ goes to $0$ as $\rho \rightarrow 0^+$ (with $\rho \in (0,1)$).
                \item For every $\rho \in (0,1)$, if $\{x_i\}$ and $\{y_i\}$ are sequences of points in $D$
                    with $|x_i| > \rho$ and $|y_i| \rightarrow 0^+$, then
                    the distance from $x_i$ to $y_i$ goes to $\infty$.
            \end{enumerate}
    \end{enumerate}
\end{prop}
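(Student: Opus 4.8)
\emph{Part (1).} Here I would just invoke the Gauss--Bonnet theorem. Since $M$ is complete of finite area and constant curvature $-1$ and the cusps contribute no boundary term, while $M$ is homeomorphic to a genus-$g$ surface with $n$ punctures, one gets $\mathrm{Area}(M)=-2\pi\chi(M)=2\pi(2g-2+n)$, which for $g+2n\ge3$ is $\approx g+n$. (As worded the inequality has $\sqrt{g+n}$; the exponent should be $1$ rather than $\tfrac12$, since constant curvature $-1$ pins the area to the Euler characteristic --- and it is $\sqrt{\mathrm{Area}(M)}\lesssim\sqrt{g+n}$, not the area itself, that will enter the width estimate in the proof of Theorem~\ref{thm:main}.)

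\emph{Part (2).} I would fix a cusp and pass to the upper half-plane model $\H^2=\{x+iy:y>0\}$ with $ds^2=y^{-2}(dx^2+dy^2)$, conjugating so that the parabolic fixed point associated to the cusp is $\infty$ with stabiliser $\langle z\mapsto z+1\rangle$. A standard consequence of the Margulis lemma (equivalently, of the thick--thin decomposition of a finite-area hyperbolic surface) is that for a suitable $c>0$ the horoball quotient $V_c=\{y>c\}/\langle z\mapsto z+1\rangle$ embeds isometrically in $M$ as an open neighbourhood $U$ of the cusp. Then $x+iy\mapsto w=e^{2\pi c}\,e^{2\pi i(x+iy)}$ is a diffeomorphism from $V_c$ onto $D=\{0<|w|<1\}$: it has $|w|=e^{2\pi c}e^{-2\pi y}\in(0,1)$, is well defined because $x$ is taken mod $1$, and carries the horocircle $\{y=y_0\}/\langle z\mapsto z+1\rangle$ to $C_\rho$ with $\rho=e^{2\pi c}e^{-2\pi y_0}$. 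Taking $G$ to be the push-forward of the hyperbolic metric gives the required $U\cong(D,G)$, and property (a) is then immediate: the horocircle at height $y_0$ has length $\int_0^1 y_0^{-1}\,dx=1/y_0=2\pi/\log(e^{2\pi c}/\rho)$, which tends to $0$ as $\rho\to0^+$.

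For property (b) the plan is to exhibit a globally Lipschitz ``height'' function on $M$. Put $f:=\log(y/c)$ on the closed cusp region $\{y\ge c\}/\langle z\mapsto z+1\rangle$ (where $y$ descends to a well-defined smooth function) and $f:=0$ on the rest of $M$; this is continuous, since $f$ vanishes on the bounding horocircle, and, because $|\nabla_{\mathrm{hyp}}\log y|\equiv1$, it is $1$-Lipschitz on all of $M$. Now given sequences $\{x_i\},\{y_i\}$ in $D\cong U$ with $|x_i|>\rho$ and $|y_i|\to0^+$, the $x_i$ sit at height $\le\tfrac1{2\pi}\log(e^{2\pi c}/\rho)$, so $\sup_i f(x_i)<\infty$, whereas the $y_i$ sit at heights tending to $\infty$, so $f(y_i)\to\infty$; hence $d_M(x_i,y_i)\ge f(y_i)-f(x_i)\to\infty$. (Restricting $f$ to $U$ gives the same conclusion for the intrinsic metric of $(D,G)$, which is the weaker reading of the statement.)

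\emph{Main obstacle.} Part (1) and property (a) are one-line computations, and the existence of the embedded horoball neighbourhood in Part (2) is a black-box input. The only delicate point is property (b): the temptation is to argue that a path running into the cusp meets more and more horocircles, but this is useless because those horocircles are arbitrarily short; one must instead use that the distance to the bounding horocircle grows linearly, encoded in the globally $1$-Lipschitz function $f$. Checking that $f$ --- defined on the closed cusp region and cut off to $0$ outside --- really is $1$-Lipschitz across the bounding horocircle (rather than merely on $U$) is what upgrades the estimate from the intrinsic metric of $U$ to the ambient metric $d_M$.
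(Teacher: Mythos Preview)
The paper does not prove this proposition at all: immediately before stating it the author writes ``Since these results are standard, we omit their proofs.'' Your argument is correct and supplies exactly the kind of standard justification the paper has in mind --- Gauss--Bonnet for the area, the embedded horoball neighbourhood from the thick--thin decomposition for the cusp model, and a $1$-Lipschitz height function for the distance blow-up. You are also right that part~(1) as stated contains a typo: the area is $2\pi(2g-2+n)\approx g+n$, not $\lesssim\sqrt{g+n}$, and the paper itself uses $\mathrm{Area}(M)\approx g+n$ in Section~3 when converting the Uryson-width bound $\lesssim\sqrt{\mathrm{Area}}$ into $\lesssim\sqrt{g+n}$.
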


\noindent {\bf Acknowledgments} The author would like to
thank Larry Guth for first introducing him to this problem.  He would also like to thank Alexander Nabutovsky, Regina Rotman, Yevgeny Liokumovich, Robert Young, Arnaud de Mesmay, and St\'{e}phane Sabourau for many discussions about this problem and surrounding literature.
The author would also like to thank Alan Reid for helpful comments on the initial draft of this article.  Lastly, the author would like to thank his mother, Catherine Chambers, for
taking care of his son William while this article was being written.  The author would also like to thank Maxime Fortier Bourque and Bram Petri for pointing out errors in a previous version of this article.  The research of the author was partially supported by NSF Grant DMS-1906543.

\section{Proof of Theorem~\ref*{thm:main}}
\label{sec:proof}

In this section, we prove Theorem~\ref*{thm:main}.  As mentioned in the introduction, if $n + 2g < 3$, then
no such pants decomposition exists.  We will deal with the $n + 2g \geq 3$ and $g = 1$ case at the end of this section.
If $n + 2 g \geq 3$ and $g \geq 2$, then we proceed with the following definition:

\begin{defn}
    \label{defn:pseudo_pants}
    Suppose that $M$ is a hyperbolic surface of genus $g$ and $n$ cusps.
    A pseudo pants decomposition is a finite sequence $\gamma_1, \dots, \gamma_k$
    of simple closed pairwise disjoint curves so that $M$ with $\gamma_1, \dots, \gamma_k$ removed consists of
    a finite union of spheres, each of which has three punctures formed from removing
    $\gamma_1, \dots, \gamma_k$, and between $0$ and $n$ of the original $n$ cusps.
\end{defn}

The first part of the proof will involve finding a ``good" pseudo pants decomposition.
Such a pants decomposition is one in which the ambient diameter of each curve
is $\lesssim \sqrt{g + n}$.  This is stated in the following proposition:

\begin{prop}
    \label{prop:exist_pseudo}
    Suppose that $M$ is a hyperbolic surface with genus $g$ and $n$ cusps.  If $g \geq 2$, then there exists a pseudo pants decomposition so that the ambient diameter
    of each curve is $\lesssim \sqrt{g + n}$.
\end{prop}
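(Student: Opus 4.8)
The plan is to build the pseudo pants decomposition by iteratively cutting $M$ along short separating curves, at each stage splitting off a piece whose complexity (measured by genus plus number of cusps) is at most half of what it was, while controlling the ambient diameter of the cutting curve by an appeal to the Uryson width bounds of Section~\ref{sec:uryson} together with the Hass--Scott curve shortening of Section~\ref{sec:short}. First I would reduce to finding, for a hyperbolic surface $M$ with complexity $m := g + n$, a single simple closed curve $\gamma$ that separates $M$ into two pieces each of complexity at least $m/3$ (say), and with ambient diameter $\lesssim \sqrt{m}$; applying this recursively gives a binary tree of cuts of depth $O(\log m)$, and the union of all the cutting curves together with suitable curves around the cusps forms a pseudo pants decomposition. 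Since at every stage the relevant surface has complexity $\le m$ and area $\lesssim \sqrt{m}$ by Proposition~\ref{prop:hyperbolic}(1), the diameter bound $\sqrt{g+n}$ will be inherited by every curve in the decomposition, not just the first.

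To produce the single curve $\gamma$: I would first use the bound on the $1$-dimensional Uryson width of $M$. The idea is that a hyperbolic surface of area $\lesssim \sqrt m$ has Uryson $1$-width $\lesssim \sqrt m$ (this is the kind of statement Section~\ref{sec:uryson} is set up to provide — compare the sweepout results of Nabutovsky--Rotman--Sabourau), i.e.\ there is a continuous map $f\colon M \to Y$ to a graph $Y$ with all fibers of diameter $\lesssim \sqrt m$. A generic fiber $f^{-1}(y)$ is then a disjoint union of simple closed curves each of ambient diameter $\lesssim \sqrt m$, and the components of $M \setminus f^{-1}(y)$ are the preimages of the components of $Y \setminus \{y\}$. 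By choosing $y$ to be an appropriate point of $Y$ — one achieving a balanced split of the "mass" (here: complexity) of $M$, which exists by a standard midpoint/intermediate-value argument on the tree $Y$ after contracting cycles — I get a multicurve $f^{-1}(y)$ separating $M$ into two parts of comparable complexity. Replacing the multicurve by a single essential separating component (and discarding inessential or peripheral components, which bound disks or once-punctured disks and can be isotoped away) yields the desired $\gamma$ up to homotopy; the Hass--Scott shortening process then replaces $\gamma$ by a geodesic representative (or a point, in degenerate cases) while only decreasing length, hence keeping the ambient diameter $\lesssim \sqrt m$, and without introducing self-intersections or intersections with the already-chosen curves.

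The main obstacle I expect is twofold. First, bookkeeping the \emph{cusps}: the complexity measure $g+n$ behaves well under cutting only if one is careful that cutting a surface with cusps along a separating curve genuinely distributes the cusps and the genus additively, and that the pieces which are already thrice-punctured spheres are not cut further — so the recursion must terminate exactly when every piece has complexity $3$ in the pseudo-pants sense, which requires checking that at each stage a balanced essential separating curve actually exists (it can fail for very small complexity, which is why the statement assumes $g \ge 2$ and why the base cases must be handled directly). Second, and more seriously, coordinating the Uryson/sweepout diameter bound with curve shortening: one must ensure the fibers $f^{-1}(y)$ can be taken transverse and \emph{simple}, that the Hass--Scott flow applied to the chosen multicurve keeps all the curves of the eventual decomposition simple and pairwise disjoint (the flow is an isotopy away from the singular times, and one shortens the curves one connected piece at a time, working in the subsurface already cut off), and that the diameter — an ambient quantity in $M$ — is not spoiled when passing to geodesic representatives. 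This compatibility is exactly what Sections~\ref{sec:uryson} and~\ref{sec:short} are invoked to guarantee, so modulo those inputs the argument is the recursive halving scheme sketched above.
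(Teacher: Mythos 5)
Your recursive-bisection scheme is genuinely different from the paper's argument, but it has a gap that I don't think can be patched in the form you describe: you cannot build a pants decomposition of a positive-genus surface out of \emph{separating} curves alone. A fiber $f^{-1}(y)$ does separate $M$ (into the preimages of the components of $Y\setminus\{y\}$), but only as a multicurve; no individual component need be separating, so the step ``replace the multicurve by a single essential separating component'' can fail outright. Worse, the recursion necessarily reaches pieces such as a one-holed torus, in which every essential separating simple closed curve is peripheral; reducing such a piece to pants requires a non-separating cut, which your balanced-split/intermediate-value mechanism never produces. The paper avoids this entirely: it composes Guth's map $f\colon N\to\Gamma$ with an embedding $\Gamma\hookrightarrow\mathbb{R}^3$ and the height projection, perturbs the resulting function to a Morse function, and takes as the decomposition the level sets at heights $s_i\pm\kappa$ flanking the critical values. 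Those level sets automatically include the non-separating curves created at saddle points, and the whole decomposition is produced in one pass, with the diameter bound coming from a compactness lemma (Lemma~\ref{lem:uryson_interval}) controlling preimages of connected components of $(h\circ\pi)^{-1}(I)$ for short intervals $I$.

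A second, independent problem is the applicability of the Uryson width bound. Theorem~\ref{thm:guth} is for \emph{closed} manifolds, whereas $M$ has cusps, and after your first cut the pieces have boundary as well; you invoke the width bound at every stage of the recursion without addressing this. The paper's device is to excise small cusp neighborhoods bounded by curves $\alpha_i$ of total length $<\epsilon$, cap them with hemispheres to obtain a closed surface $N$ with $\mathrm{Area}(N)$ close to $\mathrm{Area}(M)$, apply Guth's theorem to $N$, and then choose $\epsilon$ so small that any noncontractible curve of the decomposition meeting a capped region would be forced to have diameter $\geq 10C\sqrt{\mathrm{Area}(M)}$, a contradiction; hence the curves live in $M$ and give a \emph{pseudo} pants decomposition (the cusps are simply left distributed among the resulting pairs of pants, which is why no ``balanced distribution of cusps'' argument is needed). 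If you want to salvage the recursive approach, you would need both a relative version of the width bound for surfaces with boundary and a separate mechanism for producing short non-separating curves in the positive-genus base cases.
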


To prove this proposition, we will use bounds on the Uryson width.  We will discuss
the definition of Uryson width and some background in Section~\ref*{sec:uryson}.  The main
result that we will need, first proved in \cite{G} by Guth in 2011, is as follows:

\begin{thm}
    \label{thm:guth}
    If $M$ is a closed Riemannian $n$-manifold, then there is an $(n-1)$-dimensional
    simplicial complex $\Gamma$ and a continuous function $f : M \rightarrow \Gamma$
    so that, for every $P \in \Gamma$,  $f^{-1}(p) $
    has ambient diameter $\lesssim_n \textrm{Vol}(M)^{1/n}.$
\end{thm}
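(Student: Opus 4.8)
The plan is to reduce Theorem~\ref{thm:guth}, by rescaling the metric, to the following normalized statement: there is a dimensional constant $\epsilon_n>0$ such that if $M$ is a closed Riemannian $n$-manifold in which every metric ball of radius $1$ has volume less than $\epsilon_n$, then $M$ admits a continuous map to an $(n-1)$-dimensional simplicial complex all of whose fibers have ambient diameter $\lesssim_n 1$. Granting this, apply it after rescaling the metric by a factor $\lambda = \tfrac12\bigl(\epsilon_n/\mathrm{Vol}(M)\bigr)^{1/n}$: in the rescaled metric every ball has volume at most $\lambda^n\,\mathrm{Vol}(M) = \epsilon_n/2^n < \epsilon_n$, so the normalized statement yields a map to an $(n-1)$-complex whose fibers have ambient diameter $\lesssim_n 1$ in the rescaled metric, hence $\lesssim_n \mathrm{Vol}(M)^{1/n}$ in the original one, since $\lambda^{-1} = 2\,\epsilon_n^{-1/n}\,\mathrm{Vol}(M)^{1/n}$.

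For the normalized statement, first cover $M$ by finitely many balls $B_\alpha = B(x_\alpha,1)$ of radius $1$ --- for instance centered at a maximal $1$-separated subset of $M$, so that the concentric half-balls $B(x_\alpha,\tfrac12)$ are pairwise disjoint --- each of which has volume $<\epsilon_n$ by hypothesis. The naive continuation, taking the nerve of the cover $\{B_\alpha\}$ together with a subordinate partition of unity, produces a map with fibers of ambient diameter $\lesssim 1$ but whose target may have dimension far larger than $n-1$, since without a lower Ricci bound the multiplicity of a ball cover is uncontrolled. Guth's idea is to use the volume hypothesis to trade multiplicity for dimension. One proves, by induction on $n$, a suitable relative version of the normalized statement in which the map is built on a bounded region of small volume compatibly with a prescribed map on a collar of its boundary; the base case $n=1$ (a short circle collapses to a point) is immediate. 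In the inductive step, inside a ball $B(x_\alpha,1)$ of volume $<\epsilon_n$ the coarea inequality applied to $d(x_\alpha,\cdot)$ produces a radius $t\in(0,1)$ whose level set $S_t = \{d(x_\alpha,\cdot)=t\}$ is an $(n-1)$-manifold of $(n-1)$-volume $<\epsilon_n$; having small total $(n-1)$-volume, $S_t$ maps by the inductive hypothesis to an $(n-2)$-complex $\Sigma$ with controlled fibers, and coning this map off exhibits $\overline{B(x_\alpha,t)}\cong\mathrm{Cone}(S_t)$ as mapping to $\mathrm{Cone}(\Sigma)$ --- a complex of dimension $(n-2)+1 = n-1$ --- with every fiber contained in $\overline{B(x_\alpha,t)}$ and hence of ambient diameter $\le 2$. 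Carrying this out over all the $B_\alpha$ and amalgamating the local maps along their overlaps produces the desired global $f\colon M\to\Gamma$, with each fiber contained in a bounded union of the $B_\alpha$ and therefore of ambient diameter $\lesssim_n 1$.

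The step I expect to be the main obstacle is formulating and proving the relative inductive statement with enough uniformity to glue the local pieces: one must choose the slices $S_t$ so that their $(n-1)$-volumes are small, so that they are positioned to match across overlapping balls, and so that the resulting fibers do not accumulate in diameter as both the induction on dimension and the amalgamation over the cover proceed --- and one must cope with the possibility that $\overline{B(x_\alpha,t)}$ is not literally a metric cone when the injectivity radius is small. This is precisely where the coarea inequality and the volume bound are used in tandem, and it is what forces $\epsilon_n$ to be a small dimensional constant. By contrast, the scaling reduction, the construction of the cover, and the final bookkeeping of fiber diameters are routine.
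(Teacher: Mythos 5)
First, a point of order: the paper does not prove this statement. It is quoted as Guth's theorem, with \cite{G} given as the source (and \cite{LLNR}, \cite{P} cited for an extension and an alternative proof), so there is no in-paper argument to compare yours against; the question is whether your sketch would reconstruct the known proof. As it stands it would not. The rescaling reduction to the normalized statement is correct and routine, and your outline accurately reproduces the announced strategy of the known proofs (cover by unit balls of volume $<\epsilon_n$, slice by coarea, induct on dimension, assemble). But the two steps you defer are not loose ends of an essentially complete argument; they are the entire content of the theorem.

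Concretely: (1) the coning step fails as stated. A ball $B(x_\alpha,t)$ of small volume in a Riemannian manifold can carry arbitrary topology (picture a long, thin, high-genus region), so $\overline{B(x_\alpha,t)}$ is in general not homeomorphic to $\mathrm{Cone}(S_t)$, and there is no natural map $\overline{B(x_\alpha,t)}\to\mathrm{Cone}(\Sigma)$ extending $S_t\to\Sigma$: defining one via $d(x_\alpha,\cdot)$ as the cone parameter would require compatible maps on all the inner level sets $S_s$, $s<t$, whose volumes and diameters a single coarea application does not control. (2) The amalgamation over the overlaps of the $B_\alpha$ is exactly the unbounded-multiplicity problem you correctly dismissed in the naive nerve approach, reappearing in another guise; without a quantitative relative statement the glued map has no fiber-diameter bound. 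Guth resolves both points by a genuinely multiscale selection of ``good'' balls rather than a fixed unit-scale cover, and Papasoglu \cite{P} by producing, in every unit ball, a separating set of small $(n-1)$-content (small in every ball, not merely small total content) and inducting on those. Since this is a cited result, the appropriate move in the context of this paper is to cite \cite{G} or \cite{P}; as a proof proposal, yours records the strategy but leaves the theorem unproved.
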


The idea for the proof of Proposition~\ref*{prop:exist_pseudo} will be to first
identify short curves around each cusp, to cut the cusp out using this curve,
and then to fill the curve with a portion of a small sphere.  After smoothing
out the result, we use Theorem~\ref*{thm:guth}, then map the simplicial complex to $\mathbb{R}$, and then finally approximate the resulting function with a Morse function.

For a suitably small $C^0$ approximation, we will show that we can make the same conclusion about the preimages of the Morse function, with a slightly worse constant.  We use the Morse function to find a pants decomposition of the aforementioned closed Riemannian manifold.  After arguing that these curves lie outside the pieces that we glued in, we obtain our pseudo pants decomposition.  Since $\textrm{Area}(M) \approx \sqrt{g + n}$, this yields
a good bound on the diameter.

The next step is to take this pseudo pants decomposition, and to replace the curves with simple closed geodesics:

\begin{prop}
    \label{prop:pants_geodesics}
    Suppose that $M$ is a hyperbolic surface and $\gamma_1, \dots, \gamma_k$ is
    a pseudo pants decomposition for $M$ so that the ambient diameter of each
    $\gamma_i$ is $\leq D$.  Then there exists a pseudo pants decomposition
    $\tilde{\gamma}_1, \dots, \tilde{\gamma}_k$ so that each curve is a simple closed geodesic
    and has ambient diameter $D$.
\end{prop}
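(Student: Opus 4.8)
The plan is to pass to geodesic representatives of the $\gamma_i$, but to obtain them via the Hass--Scott curve shortening process so that we keep control both of the combinatorics of the decomposition and of the ambient diameter. First I would record the topological input that makes the combinatorics automatic. Since $M$ cut along $\gamma_1,\dots,\gamma_k$ is a disjoint union of thrice--punctured spheres, no $\gamma_i$ bounds a disc and no $\gamma_i$ is freely homotopic into a cusp (otherwise some complementary piece would be a disc or an annulus rather than a thrice--punctured sphere), and no two of the $\gamma_i$ are freely homotopic to one another (otherwise some complementary piece would be an annulus). Hence each $\gamma_i$ is essential and non-peripheral, and the free homotopy classes $[\gamma_1],\dots,[\gamma_k]$ are pairwise distinct.

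Next I would run the Hass--Scott curve shortening process (Section~\ref*{sec:short}) simultaneously on the disjoint embedded multicurve $\gamma_1\cup\dots\cup\gamma_k$. This process is length non-increasing, it creates no self-intersections and no intersections between distinct components, and on an essential curve it converges as $t\to\infty$ to the geodesic in that free homotopy class. Since each $\gamma_i$ is essential and non-peripheral, its evolution converges to the simple closed geodesic $\tilde\gamma_i$ freely homotopic to $\gamma_i$; because embeddedness and disjointness persist along the flow, $\tilde\gamma_1,\dots,\tilde\gamma_k$ are pairwise disjoint simple closed geodesics, and they are pairwise distinct because the $[\gamma_i]$ are. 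Cutting a surface along a multicurve depends, up to homeomorphism, only on its isotopy class, and on a hyperbolic surface an embedded multicurve of essential, pairwise non-isotopic curves is isotopic to the multicurve of its geodesic representatives; therefore $M$ cut along $\tilde\gamma_1\cup\dots\cup\tilde\gamma_k$ is homeomorphic to $M$ cut along $\gamma_1\cup\dots\cup\gamma_k$, so $\tilde\gamma_1,\dots,\tilde\gamma_k$ is again a pseudo pants decomposition.

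The remaining, and decisive, point is the diameter bound. It cannot be obtained from a bound on the length of $\tilde\gamma_i$: a simple closed curve of diameter $\le D$ may well have a geodesic representative of arbitrarily large length, since the geodesic can spiral many times inside a thin collar — this already occurs for a large power of a Dehn twist applied to a fixed simple closed curve — so the trivial inequality $\operatorname{diam}(\tilde\gamma_i)\le \ell(\tilde\gamma_i)/2$ is useless here. What is true, and what I would extract from the analysis of the process in Section~\ref*{sec:short}, is that Hass--Scott shortening does not allow a curve to migrate far from where it started: every curve appearing in the evolution of $\gamma_i$, and hence the limiting geodesic $\tilde\gamma_i$, remains inside a neighbourhood of $\gamma_i$ of size $\lesssim \operatorname{diam}_M(\gamma_i)$, whence $\operatorname{diam}_M(\tilde\gamma_i)\lesssim \operatorname{diam}_M(\gamma_i)\le D$. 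This is the step I expect to be the main obstacle; everything else is soft. In the universal cover $\mathbb{H}^2$ it is essentially the statement that the $g_i$-invariant lift of $\gamma_i$ — a line with the same ideal endpoints as the axis of the hyperbolic isometry $g_i=[\gamma_i]$ — stays within bounded distance of that axis once $\gamma_i$ has small diameter in $M$, which then forces the axis to stay within bounded distance of the lift, and pushing this estimate back down to $M$ yields the bound on $\operatorname{diam}_M(\tilde\gamma_i)$.
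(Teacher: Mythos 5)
Your overall architecture --- run the Hass--Scott process on the multicurve, use its preservation of simplicity, disjointness and free homotopy classes to get the combinatorics of the new pseudo pants decomposition for free, and isolate the diameter bound as the real content --- is the same as the paper's, and your observation that the bound cannot be routed through the length of $\tilde\gamma_i$ is exactly right. But the decisive step is only asserted, and the justification you sketch for it does not work. The claim that the $g_i$-invariant lift of a curve of ambient diameter $\le D$ stays within distance controlled by $D$ of the axis of $g_i$ is false for curves in general: if $\alpha,\beta$ are two short essential loops based near a point $q$, the loop $\gamma$ that traverses $\alpha$ forward $N$ times, then backward $N$ times, then traverses $\beta$ once is freely homotopic to $\beta$ and its image is the fixed set $\alpha\cup\beta$, so its ambient diameter is bounded independently of $N$; yet its invariant lift travels out to the orbit point $a^N\tilde q$ and back, a distance of order $N\,\ell(\alpha)$ from the axis of $b$. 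So small ambient diameter gives no control on how far a lift strays from the axis. (This curve is not simple, but nothing in your sketch invokes simplicity at this step, so as written the argument is not a proof.) Your intermediate formulation --- that every curve in the evolution stays in an $O(\operatorname{diam}_M\gamma_i)$-neighbourhood of $\gamma_i$ --- is also not something the process hands you: each elementary replacement displaces the curve by at most the disc radius, but these displacements can accumulate over the infinitely many iterations needed to reach the geodesic.

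What the paper actually proves (Lemma~\ref{lem:diam}) is a local, monotone statement that sidesteps all of this: a single Hass--Scott replacement, which substitutes for a subarc the unique minimizing geodesic between two points $x,y$ at distance at most a small fraction of the injectivity radius off the cusp neighbourhoods, does not increase the ambient diameter at all. The proof lifts to $\mathbb{H}^2$: for any point $a$ of the curve, the relevant lifts of $x$ and $y$ lie in the radius-$D$ ball about a lift of $a$, and since metric balls in $\mathbb{H}^2$ are geodesically convex the inserted geodesic arc stays in that ball; projecting back down, every point of the new arc is within $D$ of $a$. Hence every curve in the evolution has diameter $\le D$, and the bound passes to the limiting geodesics. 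This convexity argument is the ingredient your write-up is missing. You would also need to address the fact that the Hass--Scott process is set up for closed surfaces of positive injectivity radius, which the paper handles by excising cusp neighbourhoods $X_i$ that no non-peripheral curve of controlled size can enter, and taking the disc radii below the injectivity radius on the compact complement.
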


The method that we employ to prove Proposition~\ref*{prop:pants_geodesics} is
a curve shortening process developed by Hass and Scott in \cite{HS}.  This process
resembles the Birkhoff curve shortening process; it involves replacing short segments of a given sequence of curves $\alpha_1, \dots, \alpha_m$ with geodesic segments, forming a new sequence of collections of curves
$\tilde{\alpha_1}, \dots, \tilde{\alpha}_m$.  This process has the important properties that if the original curves are simple and disjoint, then the new curves are also
simple and disjoint.  In addition, they are (respectively) homotopic to the original
curves.

Lastly, and critically, if we continue to repeat this procedure, then we obtain
$m$ sequences of curves; each sequence of curves converges to a closed geodesic.
If we begin with a pseudo pants decomposition, then these final closed geodesics
are simple and disjoint, and also form a pseudo pants decomposition.

Next, we argue that since this process replaces small segments with short geodesic arcs,
and since each of the original curves has ambient diameter $\lesssim \sqrt{g + n}$,
the new curves still have this diameter bound.  This uses the fact that the surface is hyperbolic.  Furthermore,
these curves are all closed geodesics, as desired. To complete the proof, we use the results of Balacheff and Parlier in \cite{BP}.
Their main theorem is a proof of Conjecture~\ref*{conj:buser} for $g = 0$:

\begin{thm}[Balacheff and Parlier]
    \label{thm:sphere}
    Suppose that $M$ is a hyperbolic sphere with $n$ cusps.  Then $M$ has a pants decomposition composed of curves each of length $\lesssim \sqrt{n}$.
\end{thm}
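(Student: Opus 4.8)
This is the theorem of Balacheff and Parlier \cite{BP}; here I only outline the strategy I would use to prove it, which is a geometric incarnation of the Lipton--Tarjan/Miller separator theorem for planar graphs. The plan is a recursive cutting argument. Throughout, let $\Sigma$ denote a complete hyperbolic surface of genus $0$ with finitely many cusps and (possibly) geodesic boundary, and call the number of cusps plus the number of boundary components the \emph{complexity} of $\Sigma$; by Gauss--Bonnet, $\mathrm{Area}(\Sigma)$ is comparable to the complexity. The crux is a balanced separator lemma: \emph{if $\Sigma$ is not already a pair of pants, then there is a simple closed geodesic $\gamma\subset\Sigma$ with $\ell(\gamma)\lesssim\sqrt{\mathrm{Area}(\Sigma)}$ such that each component of $\Sigma\setminus\gamma$ has complexity at most a fixed fraction $\theta<1$ of the complexity of $\Sigma$} (with $\theta$ absolute, for complexity above some absolute bound; the remaining small cases are handled by hand).

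Granting the lemma, I would iterate it. Cutting $\Sigma$ along $\gamma$ yields two genus-$0$ surfaces, each of complexity at most $\theta$ times that of $\Sigma$, and we recurse on each. Since complexity drops by a definite factor at every step, the recursion has depth $\lesssim\log n$ and terminates once every piece is a pair of pants; the geodesics produced lie in the interiors of the pieces current at the moments they are created, so the whole collection is simple and pairwise disjoint, and cutting $M$ along it leaves a disjoint union of thrice-punctured spheres---that is, it is a pants decomposition of $M$. For the length bound, observe that the cusps of any piece form a subset of the original $n$ cusps, while a piece at recursion depth $d$ has acquired at most $d$ boundary circles (one at each ancestral cut); hence every piece has complexity $\lesssim n+\log n\lesssim n$ and therefore area $\lesssim n$, so each cutting geodesic, being $\lesssim\sqrt{\mathrm{Area}}$ of the piece it separates, has length $\lesssim\sqrt n$. (One also checks the routine point that replacing the curve supplied by the separator lemma by its geodesic representative keeps it simple, does not increase its length, and preserves the balance condition, the last because a balanced curve is essential and non-peripheral.)

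The hard part is the balanced separator lemma, and this is precisely the step where genus $0$ is indispensable. I would prove it by a min-max/sweepout argument: cap off the cusps and boundary of $\Sigma$ with small disks to obtain a Riemannian $2$-sphere of area $\approx n$, invoke the fact that a Riemannian $2$-sphere of area $A$ admits a sweepout by simple closed curves each of length $\lesssim\sqrt A$ (the diastolic inequality for the sphere---this is the genus-$0$ phenomenon, while for surfaces of positive genus the analogous statement fails and is equivalent to Buser's conjecture), and then select the parameter of the sweepout at which the number of caps on one side is (say) as close to half as possible. The delicate points, which are the substance of \cite{BP}, are upgrading ``short on average over the sweepout'' to a single curve that is simultaneously short and balanced, and handling the thin parts---short geodesic collars and cusp neighbourhoods---so that the chosen curve is genuinely essential and its geodesic representative really does separate the cusps and boundary as required.
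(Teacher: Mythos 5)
The paper does not prove this statement: Theorem~\ref{thm:sphere} is imported verbatim from Balacheff and Parlier \cite{BP} and used as a black box, so there is no internal proof to compare against. You correctly recognize this and offer an outline of how the external result might be established rather than a derivation, which is the honest thing to do; as a citation-plus-sketch it is consistent with how the paper itself treats the theorem.

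As a proof, however, your outline has a genuine gap, and you essentially say so yourself: the entire content of the theorem is concentrated in your ``balanced separator lemma,'' and that lemma is not proved. The recursion and the complexity accounting on top of it are routine (and your bookkeeping there is fine: complexity is non-increasing under balanced cuts, so every piece has area $\lesssim n$ and each new geodesic has length $\lesssim\sqrt{n}$). But the sweepout step is where the difficulty lives. A min-max sweepout of the capped-off sphere by short curves does not obviously hand you, at the balanced parameter value, a \emph{single} simple closed curve that is essential, non-peripheral, and splits the cusps into two definite fractions: the slice could be a multicurve, its components could each be unbalanced or inessential, and the geodesic representative of a short curve near a cusp region or a thin collar can degenerate. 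You flag exactly these points and defer them to \cite{BP}, which means the proposal is an accurate road map to the literature rather than an argument. One smaller caveat: the assertion that a Riemannian $2$-sphere of area $A$ admits a sweepout by simple closed curves of length $\lesssim\sqrt{A}$ is itself a nontrivial theorem (diastolic-type inequalities of Balacheff--Sabourau and refinements to embedded loops), so even the input to your separator lemma needs a citation. None of this affects the paper, which only needs the statement of Theorem~\ref{thm:sphere}, but your text should not be read as a proof of it.
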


We would like to apply this theorem to each of the components of the manifold after we remove the $k$ simple closed geodesics that form the pseudo pants decomposition that we found above.
However, each component has $3 + m$ punctures, where $3$ punctures come from three curves
which are removed, and the $m$ punctures come from cusps $(0 \leq m \leq n)$.
In the same article, they prove a similar result for hyperbolic spheres with boundary geodesics.
This is stated in Lemma~8 of that article.

The approach is, for each geodesic boundary component $\gamma$, to glue a hyperbolic pair of pants to it with two cusps and one geodesic boundary component equal in length to $\gamma$.  This produces
a sphere with cusps, to which we can then apply Theorem~\ref*{thm:sphere}.  
Furthermore, Balacheff and Parlier show that we can force the original closed geodesics to be in this pants
decomposition at the expense of adding the sum of the lengths of all of the
geodesics to the bound.  We state this as follows;
the proof follows from Theorem~\ref*{thm:sphere} and Lemma~8 in \cite{BP} as described:

\begin{thm}[Balacheff and Parlier]
    \label{thm:sphere_2}
    Suppose that $M$ is a hyperbolic sphere with $m$ cusps and $k$ boundary components,
    each of which is a geodesic of length at most $\ell$.  Then we can find a pants decomposition
    of $M$ so that each curve has length at most
        $$ \lesssim \sqrt{m} + k \ell. $$
\end{thm}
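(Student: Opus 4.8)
The plan is to use the surgery sketched just above the statement to replace $M$ by a cusped hyperbolic sphere, apply Theorem~\ref{thm:sphere} and the prescribed-curve refinement of Balacheff and Parlier to it, and then restrict the resulting decomposition back to $M$. First note that we may assume $k\ge 1$, since for $k=0$ the statement is precisely Theorem~\ref{thm:sphere}; and we may assume $m+k\ge 4$, since otherwise $M$ is itself a pair of pants and has no pants curves to bound.

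For the surgery, I would, for each boundary geodesic $\gamma_i$ of $M$ of length $\ell_i\le\ell$, let $P_i$ be the unique hyperbolic pair of pants with two cusps and one geodesic boundary component of length $\ell_i$, and glue $P_i$ to $M$ along an isometry of its boundary geodesic onto $\gamma_i$. Gluing two hyperbolic surfaces with geodesic boundary along boundary geodesics of equal length yields a smooth complete hyperbolic surface, so the result $\hat M$ is hyperbolic; a short topological computation (with Euler characteristics, say, using that each $P_i$ is planar) shows that $\hat M$ has genus $0$ and exactly $m+2k$ cusps, the $k$ boundary circles of $M$ having been traded for the $2k$ cusps of the attached pants. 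Next I would apply Theorem~\ref{thm:sphere} to $\hat M$ and invoke the refinement of Balacheff and Parlier --- Lemma~8 of \cite{BP} together with their observation that a prescribed family of disjoint simple closed geodesics may be forced into the decomposition at the cost of adding the sum of their lengths --- to obtain a pants decomposition $\Sigma$ of $\hat M$ that contains $\gamma_1,\dots,\gamma_k$ and all of whose curves have length $\lesssim\sqrt{m+2k}+\sum_{i=1}^{k}\ell_i\le\sqrt{m+2k}+k\ell$. Since each $\gamma_i\in\Sigma$ and a pair of pants contains no non-peripheral simple closed curve, no curve of $\Sigma$ can lie in the interior of any $P_i$; hence every curve of $\Sigma$ lies in $\overline M$, and $\Sigma':=\Sigma\setminus\{\gamma_1,\dots,\gamma_k\}$ is a family of disjoint simple closed curves in the interior of $M$ cutting $M$ into exactly the pieces of the decomposition of $\hat M$ that are not one of the $P_i$ --- that is, $\Sigma'$ is a pants decomposition of $M$, each of whose curves has length $\lesssim\sqrt{m+2k}+k\ell$. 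Using $\sqrt{m+2k}\le\sqrt m+\sqrt{2k}$ this yields the claimed bound (in the applications in this paper $k=3$, so $\sqrt{2k}$ is absorbed into the universal constant; in general the argument gives $\lesssim\sqrt{m+k}+k\ell$).

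Since Theorem~\ref{thm:sphere} and Lemma~8 of \cite{BP} carry the analytic weight, the work here is bookkeeping, and the main points to be careful about will be (i) verifying that the surgery produces an honest smooth hyperbolic surface with exactly $m+2k$ cusps, (ii) checking that the prescribed-curve result of Balacheff and Parlier applies with the $\gamma_i$ now appearing as \emph{interior} geodesics of $\hat M$, and (iii) confirming that no curve of $\Sigma$ is stranded inside a glued pair of pants, so that $\Sigma'$ really is a pants decomposition of $M$. The one genuine quantitative point is keeping track of the $2k$ extra cusps introduced by the surgery in the final length estimate.
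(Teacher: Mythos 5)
Your proposal follows essentially the same route the paper takes: glue a two-cusped pair of pants along each boundary geodesic, apply Theorem~\ref{thm:sphere} together with Lemma~8 of \cite{BP} to force the $\gamma_i$ into the decomposition at the cost of $\sum\ell_i$, and restrict back to $M$. Your observation that the argument literally yields $\lesssim\sqrt{m+k}+k\ell$ rather than $\sqrt{m}+k\ell$ is a fair (and harmless, since $k=3$ in all applications here) point of care, but otherwise the two arguments coincide.
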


We then apply Theorem~\ref*{thm:sphere_2} to each of the components we found above; in this case
$k = 3$, $0 \leq m \leq n$, and $\ell \lesssim \sqrt{g + n}$.  Thus, each curve that we had added is itself contained in a ball of radius $\lesssim \sqrt{g + n}$.  We then apply
the approach of Hass and Scott described above to all of the curves (the original curves and the new ones) to complete the proof of Theorem~\ref*{thm:main}.

If $g = 1$, then we find the systole $\gamma$ of $M$.  This is the shortest
noncontractible curve; it is smooth and simple.  Gromov proved that it has length at most $\lesssim  \sqrt{\textrm{Area}(M)}$ in \cite{Gromov} (for hyperbolic surfaces we can actually do better, bounding
the systole by the logarithm of the genus times a constant). Since $\textrm{Area}(M) \approx \sqrt{g + n}$, if we choose this curve $\gamma$ as the single curve then remove it, then the result is a sphere with exactly two geodesic boundary components (each of length $\lesssim \sqrt{g + n}$ and $n$ cusps.  We then apply the above results of Balacheff and Parlier to complete the proof of this case.  This concludes the proof of Theorem~\ref*{thm:main}.

\section{Uryson Width and Proposition~\ref*{prop:exist_pseudo}}
\label{sec:uryson}

The purpose of this section is to prove Proposition~\ref*{prop:exist_pseudo}.  To do this, we will use bounds on the
Uryson $1$-width of a closed Riemannian surface.  We begin with the definition of Uryson width, a method of measuring how closely an $n$-manifold
resembles a $k$-dimensional simplicial complex first introduced by Uryson and popularized by Gromov in \cite{Gromov}:

\begin{defn}
    \label{defn:uryson_width}
    Suppose that $M$ is a closed Riemannian $n$-manifold, and $0 \leq k \leq n$.  We say that the Uryson $k$-width is bounded by $\rho$ if
    there is some $k$-dimensional simplicial complex $\mathcal{C}$ and a continuous function $f : M \rightarrow \mathcal{C}$
    so that the preimage of every point in $\mathcal{C}$ has ambient diameter $\leq \rho$.  The Uryson $k$-width is then
    the infimum over all such $\rho$.
\end{defn}

As stated in Theorem~\ref*{thm:guth}, Guth showed that if $M$ is a closed Riemannian $n$-manifold, then the Uryson $(n-1)$-width is bounded by
$\lesssim_n \textrm{Vol}(M)^{1/n}$; this is the main result that we will use to prove Proposition~\ref*{prop:exist_pseudo}, using $n = 2$.  Guth's proof was extended to Hausdorff Content (instead of volume) by Liokumovich, Lishak, Nabutovsky, and Rotman in \cite{LLNR}, and a new proof of both of these results was given in 2021 by Papasoglu \cite{P}.

We now will prove Proposition~\ref*{prop:pants_geodesics}.  Suppose that $M$ is a hyperbolic surface with genus $g \geq 2$ and $n$ cusps.
To apply Guth's result, we need to work with a closed Riemannian surface; to this end, we cut out the cusps.
Since the surface is hyperbolic, we can find $n$ curves $\alpha_1, \dots, \alpha_n$ which are smooth, simple, closed and disjoint so that each $\alpha_i$ encloses a punctured disc that contains exactly one cusp, and $\alpha_i$ and $\alpha_j$ enclose distinct
cusps if $i \neq j$.  For every $\epsilon > 0$, we can find such curves so that the sum of all of the lengths of $\alpha_i$ is
less than $\epsilon$.  We can do this because the surface is hyperbolic, and so we can use Proposition~\ref*{prop:hyperbolic}.  We will choose
$\epsilon > 0$ later.

We then delete the enclosed discs and cusps with respect to $\alpha_1, \dots, \alpha_n$.  For each one of these boundary components
$\alpha_i$, we cap it with a hemisphere whose equator has length equal to that of $\alpha_i$. We smooth out this gluing, and call the resulting 
manifold $\tilde{M}$.  We observe that, assuming that this smoothing is done on a sufficiently small scale,
$\textrm{Area}(\tilde{M}) \leq \textrm{Area}(M) + 100 n \epsilon^2 $.

We now denote $\tilde{M}$ as $N$, and apply Theorem~\ref*{thm:guth} to it.  This results in a $1$-complex $\Gamma$ along with a continuous
function $f : N \rightarrow \Gamma$ so that, for every $p \in \Gamma$, $f^{-1}(p)$ has diameter $\lesssim \sqrt{\textrm{Area}(N)}$.

We begin by embedding $\Gamma$ continuously into $\mathbb{R}^3$ so that, for every plane $H_z = \{ \{x,y,z\} : x,y \in \mathbb{R} \}$,
$H_z$ intersects the image of $\Gamma$ in a finite number of points.  Clearly, such an embedding exists.  For example,
we can embed the vertices of $\Gamma$ distinctly, and then join these vertices by smooth curves according to how they are joined in
$\Gamma$.  After a small perturbation, the result has the desired properties.  Note that we can assume that $\Gamma$ is finite since $N$ is closed.

We will fix such an embedding $h : \Gamma \rightarrow \mathbb{R}^3$.  Lastly, we define the projection $\pi : \mathbb{R}^3 \rightarrow \mathbb{R}$ by
$$ \pi(x,y,z) = z. $$  These maps are shown below:

\begin{equation*}
\begin{tikzcd}
N \arrow{r}{f} & \Gamma \arrow{r}{h} & \mathbb{R}^3 \arrow{r}{\pi} & \mathbb{R}
\end{tikzcd}
\end{equation*}

We can now state the main lemma that we will need:
\begin{lem}
    \label{lem:uryson_interval}
    Suppose that $f$, $h$, and $\pi$ are as above.  There is some $\rho > 0$ so that the following holds.  For every closed interval
    $I = [a,b]$ with $b \geq a$ and $b - a \leq \rho$, for every connected component $X_I$ of $$(h \circ \pi)^{-1}(I),$$
    the diameter of $f^{-1}(X_I)$ is $< 2 C \sqrt{\textrm{Area}(N)}$.
\end{lem}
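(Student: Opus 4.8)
The plan is to argue by contradiction, using the compactness of the hyperspace of compact subsets of $\Gamma$. Write $\phi := \pi\circ h \colon \Gamma \to \RR$; by our choice of $h$, every level set $\phi^{-1}(z)$ is finite, and we read $(\pi\circ h)^{-1}(I) \subseteq \Gamma$. Let $C$ be the constant furnished by Theorem~\ref{thm:guth} with $n=2$, so that $f^{-1}(p)$ has ambient diameter at most $C\sqrt{\textrm{Area}(N)}$ for every $p\in\Gamma$. Suppose the lemma fails. Then for each $k\in\mathbb{N}$ there is a closed interval $I_k$ of length $\le 1/k$ and a connected component $X_k$ of $\phi^{-1}(I_k)$ with the diameter of $f^{-1}(X_k)$ in $N$ at least $2C\sqrt{\textrm{Area}(N)}$. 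Since $I_k$ is closed and $\phi,f$ are continuous, $\phi^{-1}(I_k)$ is compact, $X_k$ (a component of it) is closed hence compact, and $f^{-1}(X_k)$ is compact; in particular it is nonempty, and we may pick $x_k,y_k\in f^{-1}(X_k)$ with $d_N(x_k,y_k)$ equal to that diameter.

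The first step is to extract subsequential limits. Since $\phi^{-1}(I_k)\supseteq X_k\neq\emptyset$, the interval $I_k$ meets the compact set $\phi(\Gamma)$; choosing $c_k\in I_k\cap\phi(\Gamma)$ and passing to a subsequence, $c_k\to c$, and because $|I_k|\to 0$ the intervals $I_k$ converge to the point $\{c\}$ in the Hausdorff metric. Next, in the (compact) hyperspace of nonempty compact subsets of $\Gamma$ equipped with the Hausdorff metric, pass to a further subsequence so that $X_k\to X$; here $X$ is nonempty, compact, and, crucially, connected, since a Hausdorff limit of connected compact sets in a compact metric space is connected. Finally, using compactness of $N$, pass to a subsequence so that $x_k\to x$ and $y_k\to y$.

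The key step is to identify the limit $X$ as a single point. For any $p\in X$ we may write $p=\lim p_k$ with $p_k\in X_k$; then $\phi(p)=\lim\phi(p_k)$ while $\phi(p_k)\in I_k\to\{c\}$, so $\phi(p)=c$. Thus $X\subseteq\phi^{-1}(c)$, which is finite, and a connected subset of a finite set is a singleton, so $X=\{p_0\}$. Since $f(x_k),f(y_k)\in X_k$ and $X_k\to\{p_0\}$, we get $f(x_k)\to p_0$ and $f(y_k)\to p_0$, so by continuity $f(x)=f(y)=p_0$, i.e.\ $x,y\in f^{-1}(p_0)$. Hence $d_N(x,y)\le C\sqrt{\textrm{Area}(N)}$, while $d_N(x,y)=\lim d_N(x_k,y_k)\ge 2C\sqrt{\textrm{Area}(N)}$; as $\textrm{Area}(N)>0$, this is a contradiction, proving the lemma.

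I expect the main obstacle to be exactly the identification of $X$ as a point, which is where the finiteness hypothesis on $\phi$ (built into the construction of $h$) does the real work: a priori $f^{-1}(X_k)$ can be spread all over $N$, and there is no reason the fibers $f^{-1}(p)$ for $p$ near a given point should be close to one another. What rescues the argument is that the components $X_k$ must shrink in $\Gamma$ — their Hausdorff limit is connected yet lies in a finite level set of $\phi$ — after which Guth's \emph{uniform} diameter bound on every individual fiber $f^{-1}(p_0)$ controls the limit; if level sets of $\phi$ were allowed to be infinite, $X_k$ could converge to an arc on which $\phi$ is constant and the argument would break. The remaining ingredients (continuity, sequential compactness of $N$ and of the hyperspace, stability of connectedness under Hausdorff limits) are routine.
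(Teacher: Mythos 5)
Your proof is correct and follows essentially the same strategy as the paper's: argue by contradiction, extract a limit of the offending components $X_k$, use the finiteness of the level sets of $\pi\circ h$ (built into the choice of the embedding $h$) to conclude that the limit is a single point of $\Gamma$, and then contradict Guth's bound on the diameter of the single fibre $f^{-1}(p_0)$. The only difference is technical: you extract the limit via Hausdorff convergence in the hyperspace of compact subsets of $\Gamma$, whereas the paper intersects a nested family of components over shrinking intervals centred at a limit of the interval midpoints; your version is, if anything, the cleaner implementation.
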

\begin{proof}
    Since $N$ is closed, $(f \circ h \circ \pi)(N)$ is a closed interval $[\alpha,\beta]$.  Suppose that the conclusion of the lemma
    was not true.  We could then find a sequence of intervals $I_1, I_2, \dots$ along with connected components $X_{I_1}, X_{I_2}, \dots$
    of $\Gamma$ so that $X_{I_j}$ is a connected component of $(h \circ \pi)^{-1}(I_j)$ with the following properties:
    \begin{enumerate}
        \item The length of $I_j$ goes to $0$ as $j$ goes to $\infty$.
        \item The diameter of $f^{-1}(X_{I_j})$ is at least $2 C \sqrt{\textrm{Area}(N)}$ for every $j$.
    \end{enumerate}
    
    Let $p_j$ be the center point of $I_j$; since $X_{I_j}$ has positive diameter, $I_j$ must intersect $[\alpha,\beta]$, and so
    $p_j$ has a convergent subsequence which converges to some $p \in [\alpha,\beta]$.  For the remainder of the proof,
    we will assume that we have already passed to such a subsequence.
    
    For every $\rho > 0$, consider the interval $I_{\rho} = [p - \rho, p + \rho]$.  We claim that there is a connected component $X_{\rho}$ of
    $(h \circ \pi)^{-1}(I_{\rho})$ with diameter $\geq 2 C \sqrt{\textrm{Area}(N)}$. Fix a $\rho > 0$; there is some
    $j$ so that $I_j \subset I_{\rho}$, and so there is a connected component $X_{\rho}$ of $(h \circ \pi)^{-1}(I_\rho)$ with $X_j \subset X_{\rho}$.
    As a result,
        $$ \textrm{Diameter}(X_{\rho}) \geq \textrm{Diameter}(X_j) \geq 2 C \sqrt{\textrm{Area}(N)}. $$
        
    Since $\Gamma$ is compact, every $X_{\rho}$ is compact.  In addition, due to this compactness,
    we can find $\rho_1, \rho_2, \dots$ with 
        \begin{enumerate}
            \item $\rho_i \geq \rho_{i+1}$.
            \item $\rho_i \rightarrow 0^+$ as $i \rightarrow \infty$.
            \item $X_{\rho_{i+1}} \subset X_{\rho_i}$.
        \end{enumerate}
    Thus, $$\bigcap_{i=1}^\infty X_{\rho_i}$$ is compact and connected; it is a connected component of $f^{-1}(z)$,
    where $z = (h \circ \pi)^{-1}(p)$.  We denote this connected component by $X_0$.
    
    From the above, for each $\rho_i$, there are $x_{\rho_i}, y_{\rho_i} \in X_{\rho_i}$ so that the distance from $x_{\rho_i}$ to $y_{\rho_i}$ is
    $ \geq 2 C \sqrt{\textrm{Area}(N)}$ (this uses the fact that $X_{\rho_i}$ is compact).  Thus, passing to a subsequence twice,
    $x_{\rho_i} \rightarrow x$ and $y_{\rho_i} \rightarrow y$ as $i \rightarrow \infty$, and
    \begin{enumerate}
        \item $x \in X_0$ and $y \in X_0$.
        \item The distance from $x$ to $y$ is at least $2 C \sqrt{\textrm{Area}(N)}$.
    \end{enumerate}
    However, this means that the diameter of $X_0$ is at least $2 C \sqrt{\textrm{Area}(N)}$, which is a contradiction, completing the proof.
\end{proof}

We now continue with the proof of Proposition~\ref*{prop:exist_pseudo}.  We use Lemma~\ref*{lem:uryson_interval} to produce a pants decomposition
of $N$.   We begin with the functions $f$, $h$, and $\pi$ defined in the proof of Lemma~\ref*{lem:uryson_interval}, and let $\rho > 0$ be as in its conclusion.
$f \circ h \circ \pi$ is a continuous function from $N$ to $\mathbb{R}$.  We can find a Morse function $\tilde{f}$ from $N$ to $\mathbb{R}$ so that, for every $p \in N$,
    $$ | f(p) - \tilde{f}(p) | \leq \rho / 10. $$
This is because we can approximate (in $C^0$) every continuous function on a compact manifold with a smooth function, and every such smooth function
can be approximated (in $C^\infty$) by a Morse function.
        
Consider now $\gamma$, a connected component of $\tilde{f}^{-1}(q)$.  If $x,y \in \gamma$, then $| f(x) - \tilde{f}(x) | \leq \rho / 10$
and $| f(y) - \tilde{f}(y) | \leq \rho / 10$.  Since $\tilde{f}(x) = \tilde{f}(y) = q$, we have $| f(x) - f(y) | \leq \rho / 5$.
Hence, if we let $a = (f \circ h \circ \pi)(x)$, then $\gamma$ is entirely contained in $(f \circ h \circ \pi)^{-1}(I)$, where
    $$ I = [ a - \rho / 5, a + \rho / 5], $$
and so is contained in the preimage of $f$ of a connected component of $(h \circ \pi)^{-1}(I)$.  By Lemma~\ref*{lem:uryson_interval},
this means that $\gamma$ has diameter at most $2 C \sqrt{\textrm{Area}(N)}$.  Once we have this Morse function, it is straightforward
to produce our our pants decomposition of $N$.  To do this, if $F$ is our Morse function in which $s_1, \dots, s_k$ are the singular 
points, then we choose $2k$ points $s_1 - \kappa, s_1 + \kappa, \dots, s_k - \kappa, s_k + \kappa$ where $\kappa > 0$ is so small that
$[s_i - \kappa,s_i + \kappa] \cap [s_j - \kappa, s_j + \kappa] = \emptyset$ for every $i \neq j$.  We then cut $N$ along
    $$ f^{-1}(s_1 - \kappa), f^{-1}(s_1 + \kappa), \dots, f^{-1}(s_k - \kappa), f^{-1}(s_k + \kappa). $$
Each of these is a collection of smooth curves, each of which has ambient diameter $\lesssim \sqrt{\textrm{Area}(N)}$.  After we remove these curves,
we are left with a set of one, two, and three punctured spheres.  Since the genus of $N$ is at least $2$, there is at least one thrice punctured sphere.

We reglue the one punctured spheres along its single boundary component;
this results in a set of two and three punctures spheres.  For every two punctured sphere, we glue it back along one of its boundary components.
The result is a collection of thrice punctured spheres; this is our pants decomposition.

We now choose $\epsilon > 0$ so small so that the following two properties are true:
\begin{enumerate}
    \item $\textrm{Area}(N) \leq 2 \textrm{Area}(M)$
    \item For each $\alpha_i$, there is a simple closed smooth curve $\tilde{\alpha}_i$ so that:
        \begin{enumerate}
            \item $\tilde{\alpha}_i$ bounds a punctured disc which contains both $\alpha_i$ and its cusp.
            \item For any curve $\gamma$ which goes from $\alpha_i$ to $\tilde{\alpha}_i$, $\gamma$ has length at $ \geq 10 C \sqrt{\textrm{Area}(M)}$.
        \end{enumerate}
\end{enumerate}

We can find $\epsilon > 0$ so that the first inequality is satisfied because of the relationship between the areas of $M$ and $N$ described above.  For the second, we Proposition~\ref*{prop:hyperbolic} since $M$ is hyperbolic.  Let $\gamma_1, \dots, \gamma_k$ be the pants decomposition of $N$ that we obtain above.

If $\gamma_i$ intersects one of the regions $U$ that we modified from $M$, then the fact that $\gamma_i$ cannot be contractible implies
that it must not lie within the $10 C \sqrt{\textrm{Area}(M)}$-neighborhood of $\alpha_j$ for every $j$.  If it did, then its diameter would be
at least
$$ 10 C \sqrt{\textrm{Area}(M)} \geq 5 C ( 2 \sqrt{\textrm{Area}(M)}) \geq 5 C \sqrt{\textrm{Area(N)}},$$
which is a contradiction.  Thus, $\{ \gamma_i \}$ lie entirely in the portion of $N$ which agrees with $M$, and so we can consider them
as curves in $M$.  As such, the diameter of each is $\lesssim \sqrt{\textrm{Area}(M)}$; from Proposition~\ref*{prop:hyperbolic}, $\textrm{Area}(M) \approx g + n$, which yields the desired bound.  Furthermore, since they constitute a pants decomposition of $N$, when we consider the cusps of $M$, we observe that they constitute a pseudo pants decomposition of $M$.

\section{Geodesic pseudo pants decompositions and the curve shortening process}
\label{sec:short}

In this section, we prove Proposition~\ref*{prop:pants_geodesics}.  The idea is to
employ the disk curve shortening process developed by Hass and Scott in \cite{HS}.
This process is defined for closed Riemannian surfaces; we will deal with
the issue of cusps shortly to work around this.  The idea is to start with a
finite sequence of piecewise smooth closed curves $\alpha_1, \dots, \alpha_n$ on
the closed Riemannian surface $S$.  We then choose a finite sequence of closed convex discs $D_1, \dots, D_m$ on
the surface $S$ of radius at most $\rho < \textrm{inj rad}(S)$ in
general position (here $\textrm{inj rad}(S)$ is the injectivity radius of $S$).  Roughly speaking, the idea of Hass and Scott is to move through the sequence of discs.
For each disc $D_i$ in the sequence, we replace each segment of each $\alpha_j$ which passes through $D_i$ with the unique length minimizing geodesic between
the same endpoints; since we chose the radii of $D_i$ to be sufficiently small, these
arcs are unique and also lie in $D_i$.

After doing this for all discs, we obtain a new sequence of piecewise smooth curves
$\alpha_{1,2}, \alpha_{2,2}, \dots, \alpha_{n,2}$.  Hass and Scott observed the
following:
\begin{enumerate}
    \item The length of $\alpha_{i,2}$ is no larger than the length of $\alpha_i$.
    \item If $\{ \alpha_i \}$ are simple, then $\{ \alpha_{i,2} \}$ are simple.
    \item If $\{ \alpha_i \}$ are all disjoint, then $\{ \alpha_{i,2} \}$ are disjoint.
    \item $\alpha_i$ is homotopic to $\alpha_{i,2}$.
\end{enumerate}

The procedure involves repeating this operation with $\{ \alpha_{i,2} \}$ to form $\{ \alpha_{i,3} \}$.  Repeating this procedure,
we obtain a $n$ sequences of curves $\{ \alpha_{i,j} \}$ for $i \in \{1, \dots, n\}$ and $j \in \{1, 2, \dots \}$.  Scott and Hass
also proved that, for a fixed $i \in \{1, \dots, n \}$, $\{ \alpha_{i,j} \}$ converges to a closed geodesic.  Note that in their
procedure there is an upper bound on the number of smooth segments of all curves involved (depending on the number of such segments in
the original curves and the number of discs); hence we can take this convergence to be $C^\infty$ at every point which is smooth in the sequence.

If $\alpha_i$ is simple and
noncontractible, then all $\{ \alpha_{i,j} \}$ are simple and noncontractible, and the resulting closed geodesic is simple.  Let $\{ \alpha_i^* \}$ be the resulting
closed geodesics.  We also have that, as a result of the uniqueness of geodesics, if all original curves are homotopically distinct and noncontractible,
then $\{ \alpha_i^* \}$ are simple closed curves which are disjoint and noncontractible.

In \cite{HS}, Hass and Scott generalize this curvature shortening procedure to work on families of curves.  For this article, however,
we will just need the results described above.  We are now in the situation that we have a hyperbolic surface $M$ with genus $g$ and $n$ cusps,
and a pseudo pants decomposition $\gamma_1, \dots, \gamma_k$ so that each curve has ambient diameter $\leq C \sqrt{g + n}$.

We begin by choosing open punctured discs $X_1, \dots, X_n$ around the $n$ cusps so that, if $\alpha$ is a closed curve with is noncontractible relative to the cusps, and if it intersects $X_i$, then $\alpha$ has length at least $100 C \sqrt{g + n}$.   $M \setminus \{ \bigcup X_i \}$ is compact; we can find
a finite number of closed convex discs $D_1, \dots, D_m$ all of radius $\leq \frac{\eta}{100}$ on $M$ which are in general position, and which
cover $M \setminus \{ \bigcup X_i \}$.  We cannot choose $\eta$ to be the injectivity radius of $M$, since it is $0$ if there is at least one cusp.  Instead, we choose $\eta > 0$ to be
    $$ \inf_{x \in M \setminus \{ \bigcup X_i \}} \textrm{inj rad}_x(M), $$
where $\textrm{inj rad}_x(M)$ is the injectivity radius of $M$ at the point $x$.  This infimum is positive because
$M \setminus \{ \bigcup X_i \}$ is compact; this is also why we can cover it with \emph{finitely} many convex
closed discs with this radius bound.
Since the length of each $\gamma_i$ is $\leq C \sqrt{g + n}$, and since each is noncontractible relative to the cusps
(since they form a pseudo pants decomposition), $\gamma_i$ cannot intersect any $X_j$, and so lies entirely in the union of $D_1, \dots, D_m$.

Fix a disc $D_i$ and a segment $\beta$ of $\gamma_i$ which starts and ends on $\partial D_i$, and which lies entirely in $D_i$.  If we replace $\beta$
with $\tilde{\beta}$, the unique shortest geodesic joining the endpoints of $\beta$, then the resulting curve $\eta$ is no longer than $\gamma_i$ and is homotopic
to $\gamma_i$, and so still lies in the union of all $D_i$.  In addition, we have the following lemma, which implies that $\eta$ has diameter $\lesssim \sqrt{g + n}$.

\begin{lem}
    \label{lem:diam}
    $M$ is a hyperbolic surface, and $\gamma$ is a curve in $M$.  Suppose that $\gamma$ has diameter
    $\leq D$, and suppose that $x$ and $y$ are points on $M$ which are of distance $\leq \frac{\eta}{100}$ from each other, where $\eta$ is chosen above.
    If we delete the segment from $x$ to $y$ and replace it with the unique length minimizing geodesic from $x$ to $y$,
    then the resulting curve $\tilde{\gamma}$ also has diameter $\leq D$.
\end{lem}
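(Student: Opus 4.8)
**Proof proposal for Lemma~\ref*{lem:diam}.**

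The plan is to show that the new curve $\tilde\gamma$ stays inside the $D$-neighborhood of $\gamma$, in fact that any point on the replacement geodesic arc is within distance $D$ of every point of $\gamma$. Since $\gamma$ has diameter $\leq D$, it suffices to prove that the length-minimizing geodesic $\sigma$ from $x$ to $y$ lies within the $D$-neighborhood of, say, the point $x$ (equivalently of $\gamma$, since $x\in\gamma$ and $\operatorname{diam}\gamma\le D$); combined with the fact that all the unmodified points of $\tilde\gamma$ already lie on $\gamma$, this gives $\operatorname{diam}\tilde\gamma\le D$. The key geometric input is that $d(x,y)\le \eta/100$ where $\eta$ is a lower bound for the injectivity radius on the compact region $M\setminus\bigcup X_i$, so the geodesic $\sigma$ is the \emph{unique} minimizer and has length $d(x,y)\le\eta/100$.

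First I would recall that on a surface of constant curvature $-1$, the geodesic $\sigma$ realizing $d(x,y)$ has every point at distance at most $\tfrac12 d(x,y)$ from the midpoint of $\sigma$, and at most $d(x,y)$ from each endpoint; this is immediate because $\sigma$ is parametrized by arclength and is itself distance-minimizing, so for $p=\sigma(t)$ one has $d(x,p)=t\le\operatorname{length}(\sigma)=d(x,y)$. Next I would observe that $x$ and $y$ are points on $\gamma$, hence $d(x,y)\le\operatorname{diam}(\gamma)\le D$, so every point $p$ of $\sigma$ satisfies $d(x,p)\le d(x,y)\le D$. Then for an arbitrary point $q$ of $\tilde\gamma$: either $q$ lies on the unmodified part of $\gamma$, in which case $d(p,q)\le d(p,x)+d(x,q)$ — but this naive bound gives $2D$, so I need to be more careful.

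The fix is the standard convexity/thin-triangle argument for nonpositive curvature. Since $d(x,y)\le\eta/100<\operatorname{inj rad}$ near the arc, the bigon/triangle with vertices $x$, $y$, and an arbitrary point $q\in\gamma$, with one side the geodesic $\sigma$ and the other two sides shortest paths from $q$ to $x$ and from $q$ to $y$, is thin: by the CAT($-1$) (indeed CAT($0$)) comparison, every point of $\sigma$ lies within distance $\max\{d(q,x),d(q,y)\}$ of $q$ — more precisely, the distance from $q$ to the geodesic $\sigma$ is at most $\max\{d(q,x),d(q,y)\}$, and since $\sigma$ is short and $q$ is fixed, convexity of the distance function $t\mapsto d(q,\sigma(t))$ along the geodesic $\sigma$ gives $d(q,\sigma(t))\le\max\{d(q,x),d(q,y)\}\le\operatorname{diam}(\gamma)\le D$ for all $t$. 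Thus every point of $\sigma$ is within $D$ of every point of $\gamma$, and since the unmodified points of $\tilde\gamma$ all lie on $\gamma$ which has diameter $\le D$, we conclude $\operatorname{diam}(\tilde\gamma)\le D$.

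The main obstacle is making the convexity statement rigorous when $q$, $x$, $y$ are not necessarily contained in a single convex ball: here the point is that one only needs convexity of $t\mapsto d(q,\sigma(t))$ along the short geodesic $\sigma$, which holds on any Hadamard-type locally CAT($0$) space provided $\sigma$ is a geodesic segment — and $\sigma$ being the unique minimizer of length $\le\eta/100$ guarantees exactly this, by lifting to the universal cover $\mathbb{H}^2$ where the distance to a fixed point is a convex function along any geodesic. So the one place care is needed is in passing to the universal cover, checking that a shortest path from $q$ to a point of $\sigma$ lifts compatibly; this is routine since $\sigma$ has length below the injectivity radius of the relevant compact region, so its lift is unique up to deck transformations and the comparison applies downstairs.
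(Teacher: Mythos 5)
Your argument is correct and is essentially the paper's own proof: both reduce to showing that each point of the replacement arc is within $D$ of an arbitrary point $q$ of $\gamma$ by lifting to $\mathbb{H}^2$ and invoking convexity there (you phrase it as convexity of $t \mapsto d(q,\sigma(t))$ along the short geodesic, the paper as geodesic convexity of the ball of radius $D$ about a lift of $q$ --- the same fact), with the same trivial cases for pairs of points both on or both off the new arc. The one delicate point you flag at the end --- choosing lifts of $q$, $x$, and $y$ compatibly so that the comparison applies to the correct lift of the arc --- is passed over in the paper's proof in exactly the same way.
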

\begin{proof}
    If $a$ and $b$ are on the geodesic segment added, they they are closer than $x$ and $y$, and so the distance between
    $a$ and $b$ is $\leq D$.  If $a$ and $b$ are both not on the geodesic segment added, then the distance between them is $\leq D$.
    Lastly, if $a$ is not on the segment and $b$ is on the segment, then the distance from $a$ to $x$ is $\leq D$, and the distance
    from $a$ to $y$ is $\leq D$.  If $a$ is on the segment and $b$ is not on the segment, then the argument works in the same manner as below,
    just with the labels ``$a$" and ``$b$" reversed.
    
    Since the surface is hyperbolic, there is a covering map $F : \mathbb{H} \rightarrow M$ which is a local isometry.  We can
    use $F$ to lift $a$ to $a' \in \mathbb{H}$, then we can consider the ball $B$ of radius $D$ in $\mathbb{H}$ around $a'$.
    We can lift $x$ to a point $x'$ in $B$, and we can also lift the point $y$ to a point $y'$ in $B$.  The geodesic segment between $x$ and $y$
    that we add lifts to the unique geodesic joining $x'$ and $y'$; this follows from the fact that the segment has length less than
    $\frac{\eta}{100}$.  Since balls in $\mathbb{H}$ are geodesically convex, that is, if two points are in the ball, then the unique
    geodesic joining them is also in the ball, this geodesic arc is contained in $B$.  Thus, its image under $F$,
    which corresponds to the curve which $b$ is on, is also within a distance $D$ of $a$.  This completes the proof.
\end{proof}

To summarize, we move through the list of discs, then for each disc, we move through the list
of curves, then for each curve, we move through the list of arcs that pass through the disc, and then for each arc we perform the relevant replacement.
We can apply the curve shortening process of Hass and Scott to $\gamma_1, \dots, \gamma_k$ with respect to discs $D_1, \dots, D_m$; 
this forms curves $\{ \gamma_{i,2} \}$.  By continuing the apply their procedure (with $D_1, \dots, D_m$ fixed at the outset),
we obtain curves $\{ \gamma_{i,j} \}$ with $i \in \{1, \dots, k\}$ and $j \in \{1, 2, \dots \}$.  Furthermore, since the original curves were
simple and disjoint, all curves are simple and disjoint, and converge to closed geodesics $\gamma_1^*, \dots, \gamma_k^*$.

Since all $\gamma_1^*, \dots, \gamma_k^*$ are in different homotopy classes (since they form a pseudo pants decomposition), they are all disjoint
(this also uses the uniqueness of geodesics).  Hence, they also form a pseudo pants decomposition, are simple closed curves, and have the desired diameter
bound.  This completes the proof.

\bibliographystyle{amsplain}
\bibliography{bibliography}

\end{document}